\newcommand{\R}{\mathbb{R}}
\newcommand{\Md}[1]{\mathcal{F}(#1)}
\newcommand{\s}{\mathbb{S}}
\newcommand{\conbod}{\mathcal{K}^n}
\newcommand{\vol}{\text{\rm Vol}}
\newtheorem{theorem}{Theorem}[section]
\newtheorem{lemma}[theorem]{Lemma}
\newtheorem{corollary}[theorem]{Corollary}
\title[Variational Logarithmic Minkowski Problem]{The (Self-Similar, Variational) Rolling Stones}
\author{Dylan Langharst and Jacopo Ulivelli}
\thanks{
The first named author was supported in part by the U.S. National Science Foundation Grant DMS-2000304 and the Chateaubriand Fellowship by the French embassy in the United States. This work was completed while the first named author was a postdoctoral researcher funded by a Fondation Sciences Math\'ematiques de Paris fellowship. Both authors were supported by the National Science Foundation under Grant DMS-1929284 while in residence at the Institute for Computational and Experimental Research in Mathematics in Providence, RI, during the Harmonic Analysis and Convexity program.
MSC 2020 Classification: 52A20, 35J05, 52A40, 49N99
Keywords: Minkowski Problem, Log Brunn-Minkowski, Torsion, Laplacian, capacity
}
\begin{document}

\maketitle
\begin{abstract}The interplay between variational functionals and the Brunn-Minkowski Theory is a well-established phenomenon widely investigated in the last thirty years. In this work, we prove the existence of solutions to the even logarithmic Minkowski problems arising from variational functionals, such as the first eigenvalue of the Laplacian and the torsional rigidity. In particular, we lay down a blueprint showing that the same result holds for more generic functionals by adapting the volume case from B\"or\"oczky, Lutwak, Yang, and Zhang. We show how these results imply the existence of self-similar solutions to variational flow problems \`a la Firey's worn stone problem.
\end{abstract}
\section{Introduction}
The starting point of this work, like many works as of late, is Minkowski's existence theorem. Let us describe the main idea. We recall that if $K$ is a convex body (i.e., a compact, convex set with non-empty interior) in the $n$-dimensional Euclidean space $\R^n$, then its \textit{surface area measure} $S_K$ is a Borel measure on the unit sphere $\s^{n-1}$ given by the following: For every Borel set $A \subset \s^{n-1},$ $$S_K(A)=\mathcal{H}^{n-1}(N^{-1}_K(A)),$$ where $\mathcal{H}^{n-1}$ is the $(n-1)$-dimensional Hausdorff measure and $N_K:\partial K \rightarrow \s^{n-1}$ is the Gauss map, which associates an element $y$ of $\partial K$ (the boundary of  $K$) with its outer unit normals. 

Given a finite Borel measure $\mu$ on $\s^{n-1}$, one may ask: Does there exist a unique (up to translations) convex body $K$ such that $dS_K=d\mu$? Minkowski's existence theorem \cite[Section 8.2]{Sh1} shows that if $\mu$ satisfies the following two conditions, then the answer is yes:
\begin{enumerate}
    \item The measure $\mu$ is not concentrated on any great hemisphere, that is
    \label{con_1}
    $$\int_{\s^{n-1}}|\langle \theta, \xi \rangle|d\mu(\xi) > 0 \quad \text{for all } \theta\in\s^{n-1}.$$
    \item The measure is centered, that is
    $$\int_{\s^{n-1}}\xi d\mu(\xi)=0.$$
    \label{con_2}
\end{enumerate}
The Brunn-Minkowski theory in convex geometry has received many generalizations, see, e.g., \cite{LE98,LE93,LE96,LYZ10,DJ96_2,HLYZ16,BLYZ12}. The generalizations usually have a corresponding non-trivial version of Minkowski's existence theorem, see, e.g., \cite{CW06,LYZ04,LYZ06,CF10,HLYZ10,HXZ21, KL23,GAL19, BLYZ13}. Such generalizations are traditionally called \textit{Minkowski Problems}. In this paper, we focus on one Minkowski problem in particular: The even logarithmic Minkowski problem. The existence of a solution to this problem was shown by B\"or\"oczky, Lutwak, Yang, and Zhang \cite{BLYZ13}. We adapt their proof, showing that the ideas established therein have far-reaching consequences and can be implemented for various problems. 

We will start with some basic definitions, and the textbook by Schneider \cite{Sh1} serves as a general reference. Let $\conbod$ denote the class of convex bodies containing the origin in their interior, and let $\conbod_e$ denote the class of symmetric convex bodies, where $K$ is symmetric if $K=-K.$ Recall that $K\in\conbod$ is uniquely determined by its support function, given by $h_K(x)=\sup_{y\in K}\langle y,x \rangle.$ We remind the reader that the \textit{cone measure} of a convex body $K$ is $\frac{1}{n}h_K(u)dS_{K}(u),$ and that the cone measure satisfies 
$$\vol_n(K)=\frac{1}{n}\int_{\s^{n-1}}h_K(u)dS_K(u),$$
where $\vol_n(K)$ is the $n$-dimensional volume (Lebesgue measure) of $K$. In \cite{BLYZ13}, Böröczky, Lutwak, Yang, and Zhang showed that given an even, non-concentrated Borel measure $\nu$ on $\s^{n-1},$ there exists some symmetric $K\in\conbod_e$ such that $$d\nu(u)=\frac{1}{n}h_K(u)dS_K(u)$$ if, and only if, $\nu$ satisfies the \textit{strict subspace concentration condition} i.e., for every subspace $H$ of $\R^n,$ $0< \text{dim } H < n,$ one has \begin{equation}\nu\left(H\cap \s^{n-1}\right)< \frac{1}{n}\nu\left(\s^{n-1}\right)\text{dim}(H).
\label{eq:SSCC}
\end{equation}
This is known as the even logarithmic Minkowski problem. The uniqueness remains open, and it would imply the famed logarithmic-Brunn-Minkowski inequality (see, e.g., \cite{BLYZ12,BLYZ13}). The subspace concentration condition appeared earlier in the literature, dating back at least to \cite{BK10}.

The logarithmic Minkowski problem is related to the fate of worn stones shown by Firey \cite{WF74}, specifically, that they become spherical. We recall a convex body $K$ is $C^2_+$ if it has strictly positive Gauss curvature, and at every $x\in \partial K$, there exists a unique tangent plane to $\partial K$. We note that in this instance, the Gauss map $N_K$ is a diffeomorphism between $\partial K$ and $\s^{n-1}$. We denote by $\kappa_K$ the Gauss curvature of a $C^2_+$ body $K$. Let $\{K(t)\}$ be a collection of convex bodies of class $C^2_+$. We write $h(t,\xi):=h_{K_{t}}(\xi)$ and $\kappa(t,\xi)=\kappa_{K(t)}(N^{-1}_K(\xi))$, where $\xi\in\s^{n-1}$. Firey explained how a worn stone can be modeled through the following PDE: For $\xi\in\s^{n-1}$ and $t\in [0, T)$ with fixed constant $T>0$
\begin{equation}
\label{eq:WornPDE}
    \pdv{h(t,\xi)}{t}=-(T\varphi)\kappa(t,\xi).
\end{equation}
for some constant of proportionality $\varphi>0$. The initial data $h(0,\xi)$ is the support function of the smooth convex body $K(0)$ that is being worn down through an abrasion process. We set $h_{K(0)}(\xi)=h(0,\xi)$. We note that we are following the problem as stated by Tso \cite{KT85} and Andrews \cite{BA99}, who showed that a solution exists. This can be equivalently stated as the following hyperbolic Monge-Amp\`ere equation:
\[
\pdv{h(t,\xi)}{t} \det(D^2h(t,\xi)+h(t,\xi)\text{I}_{n-1})=-T\varphi,
\]
where $\text{I}_{n-1}$ is the $(n-1)\times (n-1)$ identity matrix and $D^2$ is the spherical Hessian. 

We say a solution to the above Monge-Amp\`ere equation is \textit{self-similar with death time $T$} if $h(t,\xi)=T^{-\frac{1}{n}}h_{K(0)}(\xi)(T-t)^{\frac{1}{n}}$ for some $T>0$. Then, we obtain $\pdv{h(t,\xi)}{t}=-T^{-\frac{1}{n}}\frac{1}{n}h_{K(0)}(\xi)(T-t)^{\frac{1-n}{n}}$. From the homogeneity of the determinant and support function, we obtain
\[
\frac{1}{n}h_{K(0)}(\xi) \det(D^2h_{K(0)}(\xi)+h_{K(0)}(\xi)\text{I}_{n-1})=\varphi.
\]
It is well known \cite{Sch70} that for $C^2_+$ bodies, one has
$$\det(D^2h_{K(0)}(\xi)+h_{K(0)}(\xi)\text{I}_{n-1}) = \frac{dS_{K(0)}(\xi)}{d\xi}.$$
Consequently, self-similar solutions to the worn stone PDE (\eqref{eq:WornPDE} before) imply the Minkowski problem 
\[
\frac{1}{n}h_{K(0)}(\xi)  dS_{K(0)}(\xi)=\varphi d\xi,
\]
which is the same as the log-Minkowski problem for a multiple of the spherical Lebesgue measure. Hence, self-similar solutions exist via the work of Böröczky, Lutwak, Yang, and Zhang \cite{BLYZ13}. Notice, as $t\to T^-$, $K(t)$ approaches a singleton containing the origin. We remark that the original worn stone problem considered by Firey \cite{WF74} had, in place of the constant $\varphi$, $\vol_n(K(t))$. Furthermore, he proved that if a solution exists, then $K(t)$ goes to a centered Euclidean ball as $t\to\infty$. However, from the homogeneity of the volume, one can verify that a self-similar solution of the form $h(t,\xi)=T^{-\beta}h_{K(0)}(\xi)(T-t)^\beta$ cannot exist in Firey's version of the flow via direct substitution. 

The main goal of this work is, given a class of functionals arising from the calculus of variations, to establish the existence of solutions for the corresponding even logarithmic Minkowski problems. The proof adapts the volume case from \cite{BLYZ13}. The textbook by Evans \cite{Evans} will serve as a reference for many of these facts. In this introduction, we focus on one such variation in particular: Torsional rigidity. Recall that for a bounded Borel set $\Omega\subset\R^n$ with some regularity assumptions (for our purposes, convex suffices), the torsional rigidity of $\Omega$ is defined as
\[\tau(\Omega)\coloneqq \sup \left\{\left(\int_{\Omega} |w(x)|\, dx \right)^2\left(\int_{\Omega} |\nabla w(x)|^2\, dx\right)^{-1} : w \in H^1_0(\Omega), u \neq 0 \right\},\] where $H^1_0(\Omega)$ is as usual the Sobolev space obtained as the closure in $L^2(\Omega)$ of the set of absolutely continuous functions with compact support in $\Omega$. It is a standard fact that it is possible to write \[\tau(\Omega)=\int_\Omega |\nabla u_\Omega (x)|^2\, dx,\]
where $u_\Omega$ is the solution to the boundary value problem
\begin{equation}\begin{cases}
-\Delta u(x) = 1 &\text{for } x \in \Omega, \\
\quad u(x)=0 &\text{for } x \in \partial \Omega.
\end{cases}
\label{eq:tor_PDE}
\end{equation}
Here, $\Delta$ denotes the standard Laplacian on $\R^n$. Standard results in the theory of elliptic equations (see, for example, \cite{GT01}) guarantee existence and uniqueness for the solution of \eqref{eq:tor_PDE}, which is always of class $C^\infty (\Omega) \cap C(\overline{\Omega})$, where $\overline{\Omega}$ denotes the closure of $\Omega$. It was proved by Dahlberg \cite{BD77} that $\nabla u_\Omega$ exists $\mathcal{H}^{n-1}-$almost everywhere on $\partial \Omega$. See \cite[Section 2]{CF10} for more details on the properties of $\nabla u_\Omega$.

A further representation of $\tau(\Omega)$, better suited for our investigation, can be obtained  for $K\in\conbod$ and $\Omega=\text{int}(K)$ via the \textit{Hadamard formula} \cite[Theorem 3.1]{CF10} as an integral over $\partial K$; in an abuse of notation, we will identify $\Omega$ with $K$:
\begin{equation}\tau\left(K\right)=\frac{1}{n+2}\int_{\partial K}h_K(N_K(x))|\nabla u_K(x)|^2d\mathcal{H}^{n-1}(x).
\label{eq:had_tor}
\end{equation}
Considering the pushforward through the Gauss map $N_K$, we can then define on $\mathbb{S}^{n-1}$ the \textit{torsional measure} 
\begin{equation}\label{eq:tor_meas}
    \mathcal{T}_K(E)\coloneqq \frac{1}{n+2}\int_{N_K^{-1}(E)} h_K(N_K(x))|\nabla u_K(x)|^2\, d\mathcal{H}^{n-1}(x)
\end{equation}
for every Borel set $E \subset \mathbb{S}^{n-1}$. This measure will be the prototype of our investigation in this work. We remark that this very measure was at the center of the recent work of  Crasta and Fragal\'a \cite{CF23}, where they proved versions of Firey's result on the fate of a rolling stone for measures arising from the calculus of variations, including torsional rigidity. 

In this work, we attempt to streamline the approach of \cite{BLYZ13} for measures as \eqref{eq:tor_meas}. In particular, our conclusions will also apply to the first eigenvalue of the Laplacian. As a consequence of our main result (see Theorem~\ref{mainTh} below), we obtain sufficient conditions for the existence of a solution to the torsional log-Minkowski problem.
\begin{theorem}
\label{t:tor_prob}
    Let $\nu$ be an even, finite Borel measure on $\s^{n-1}$ such that $\nu$ satisfies the subspace concentration condition \eqref{eq:SSCC}. Then, there exists an origin symmetric convex body $K\in\conbod_e$ such that \[\nu=\mathcal{T}_K.\] In particular, one has
    $$\nu\left(\s^{n-1}\right)=\tau\left(K\right).$$
\end{theorem}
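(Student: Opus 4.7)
The plan is to mirror the variational strategy of Böröczky, Lutwak, Yang, and Zhang~\cite{BLYZ13}, with volume replaced by torsional rigidity $\tau$ and the cone measure by $\mathcal{T}_K$. Concretely, consider the logarithmic functional
\[
\Phi_\nu(Q) = \int_{\s^{n-1}} \log h_Q(u)\, d\nu(u), \qquad Q \in \conbod_e,
\]
and the constrained minimization problem
\[
\inf\bigl\{ \Phi_\nu(Q) : Q \in \conbod_e,\ \tau(Q) = \nu(\s^{n-1}) \bigr\}.
\]
The evenness of $\nu$ makes $\conbod_e$ the natural class over which to minimize.

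The first task is existence of a minimizer. Given a minimizing sequence $\{Q_j\}$, the non-concentration of $\nu$ on any hemisphere---a weak consequence of~\eqref{eq:SSCC} applied with $\dim H = n-1$---forces $\{Q_j\}$ to be uniformly bounded, and Blaschke selection yields a subsequential Hausdorff limit $K^* \in \conbod_e$. The delicate point is to rule out degeneracy: if $K^* \subset H$ for a proper subspace $H$ of dimension $k<n$, then $h_{K^*}\equiv 0$ on $H^\perp\cap\s^{n-1}$ and $\Phi_\nu$ may fail to be lower semicontinuous at $K^*$. Following the competitor construction in~\cite[Section~7]{BLYZ13}, one fattens $Q_j$ transversally to $H$ and compresses tangentially while preserving the constraint $\tau(Q_j) = \nu(\s^{n-1})$, using a Brunn--Minkowski-type inequality for torsional rigidity (see~\cite{CF10}). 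The strict gap in~\eqref{eq:SSCC} furnishes the precise margin that makes the modified body have strictly smaller $\Phi_\nu$, contradicting that $\{Q_j\}$ was minimizing. Hence $K^*$ is full-dimensional, $\Phi_\nu(K^*)$ is finite, and $K^*$ attains the infimum.

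Next, derive the Euler--Lagrange equation. For any continuous even $f : \s^{n-1} \to \R$ and small $|t|$, let $K^*_t$ denote the Wulff shape associated to $h_{K^*}\, e^{tf}$. Standard variation formulas give $\frac{d}{dt}\big|_{t=0}\Phi_\nu(K^*_t) = \int f\, d\nu$, while the Hadamard formula~\eqref{eq:had_tor}, combined with continuity of $\mathcal{T}$ under Hausdorff convergence (after smooth approximation of $K^*$), yields
\[
\left.\frac{d}{dt}\right|_{t=0}\log\tau(K^*_t) \;=\; \frac{n+2}{\tau(K^*)}\int_{\s^{n-1}} f\, d\mathcal{T}_{K^*}.
\]
The Lagrange multiplier condition at $K^*$ then produces $d\nu = \frac{\nu(\s^{n-1})}{\tau(K^*)}\, d\mathcal{T}_{K^*}$. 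A direct change of variables in~\eqref{eq:tor_meas}, using the scaling $u_{sK}(sx) = s^2 u_K(x)$, gives $\mathcal{T}_{sK} = s^{n+2}\mathcal{T}_K$; setting $K = s\, K^*$ with $s = [\nu(\s^{n-1})/\tau(K^*)]^{1/(n+2)}$ yields $\mathcal{T}_K = \nu$ and $\tau(K) = \nu(\s^{n-1})$, as claimed.

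The main obstacle lies in the non-degeneracy argument. The volume-based competitor construction of~\cite{BLYZ13} must be retooled using a torsional Brunn--Minkowski inequality in place of the classical one, and one must keep careful track of how $\tau$ and $\Phi_\nu$ respond to direction-specific rescalings so that the gap in~\eqref{eq:SSCC} translates into a strict decrease of $\Phi_\nu$. A secondary technical point is the Hadamard-type first variation at the potentially non-smooth minimizer $K^*$, which is handled through smooth approximation combined with Dahlberg's~\cite{BD77} boundary regularity for $\nabla u_K$.
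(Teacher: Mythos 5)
Your overall strategy — minimize $\Phi_\nu(Q)=\int_{\s^{n-1}}\log h_Q\,d\nu$ subject to $\tau(Q)=\nu(\s^{n-1})$, extract a minimizer, and derive an Euler--Lagrange equation via a logarithmic Wulff-shape perturbation and the Hadamard variational formula — is the same as the paper's, which proves a general theorem for $\alpha$-homogeneous diagonal set-valued Borel measures satisfying properties \eqref{A}, \eqref{B}, \eqref{C} and deduces the torsional case by checking those three properties. Your Euler--Lagrange step is essentially correct; note only that the variational formula at a possibly non-smooth minimizer is justified directly by \cite[Theorem 4.1]{CF10}, so no smooth approximation of $K^*$ is actually needed.

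The gap is in your coercivity claim: non-concentration of $\nu$ on great hemispheres does \emph{not} force a minimizing sequence to be bounded, and the ``weak consequence of \eqref{eq:SSCC}'' you invoke is genuinely too weak. Take $\nu=\tfrac12(\delta_{e_1}+\delta_{-e_1})$ plus a small multiple $\varepsilon$ of the uniform measure; this $\nu$ is even and non-concentrated, but violates \eqref{eq:SSCC} for $H=\mathrm{span}(e_1)$. For boxes $Q_j=[-a_j,a_j]\times[-b_j,b_j]^{n-1}$ with $a_j\to 0$, $b_j\to\infty$ chosen so that $\tau(Q_j)\asymp a_j^3 b_j^{n-1}$ stays constant, one gets $\Phi_\nu(Q_j)\approx(1-\tfrac{3\varepsilon}{n-1})\log a_j\to-\infty$ along an unbounded sequence. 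Establishing boundedness requires the full strength of \eqref{eq:SSCC}: the paper sandwiches $Q_j$ between John cross-polytopes $C_j\subset Q_j\subset nC_j$, uses monotonicity of $\tau$ (property \eqref{A}) and the Saint-Venant inequality (property \eqref{B}) to bound $\prod_i h_{i,j}$ from below, and then invokes the coercivity estimate \cite[Lemma 6.2]{BLYZ13} — the step where \eqref{eq:SSCC} actually enters — to show $\Phi_\nu(C_j)$, hence $\Phi_\nu(Q_j)$, is unbounded above whenever the $C_j$ are. Once boundedness holds, non-degeneracy of the Blaschke limit is automatic from the constraint $\tau(K^*)=\nu(\s^{n-1})>0$ together with the fact that $\tau$ vanishes continuously as a body degenerates, so your proposed ``fattening/compressing'' competitor built on a torsional Brunn--Minkowski inequality is not needed; moreover, the construction in \cite[Section~7]{BLYZ13} concerns \emph{necessity} of \eqref{eq:SSCC}, and the paper explicitly remarks that the Minkowski-addition interaction used there does not transfer to torsional rigidity.
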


We note that necessary conditions remain an interesting open problem to settle. For the volume case in \cite{BLYZ13}, this was possible through a smart use of the subspace concentration condition, which we were not able to replicate. Indeed, the argument in \cite{BLYZ13} relies on an interaction between \eqref{eq:SSCC} and Minkowski addition. This is no longer the case for torsional rigidity: For example, the infimal convolution (i.e., Minkowski addition of epigraphs) of two solutions in dimension $n$ (appropriately embedded in dimension $2n$) does not provide a solution in dimension $2n$. This is easily verified considering solutions on balls. Moreover, while \eqref{eq:SSCC} holds trivially for convex bodies with smooth boundary, in the polytopal case it is extremely difficult to provide examples since explicit solutions of \eqref{eq:tor_PDE} are not accessible. This logistic constraint can be avoided in some particularly regular instances, such as centrally symmetric fair dices (see, for example, \cite{DK89}). A polytope $P \in \conbod$ is a fair dice if its group of symmetries acts transitively on the set of facets. In this case, the cone-volume density in \eqref{eq_had_push} can be uniformly scaled, and the argument in \cite{BLYZ13} works verbatim. 

Our exposition is structured as follows. In Section~\ref{sec:diagonal}, we introduce the more general framework of our investigation; we define a general type of functional, which we call a set-dependent, $\alpha$-homogeneous Borel measure, of which $\mathcal{T}_K$ is an example. Section~\ref{sec:main} is dedicated to proving Theorem~\ref{mainTh}, the main result of this paper, where we show the existence of solutions for the even logarithmic Minkowski problems of set-dependent, $\alpha$-homogeneous Borel measures $\mathcal{F}$ satisfying some structural assumptions, which we call properties $\eqref{A},\eqref{B}$, and $\eqref{C}$, on $\conbod_e$. Theorem \ref{t:tor_prob} will follow from this result, as well as an analogous statement providing the existence of solutions to the even logarithmic Minkowski problem of the first eigenvalue of the Laplacian, Theorem~\ref{t:first_eigen}. See Section~\ref{sec:diagonal} for properties of the first eigenvalue. In Section~\ref{sec:worn}, we explain how our results prove the existence of self-similar solutions to a variational version of the Firey-Tso \cite{WF74,KT85} worn stone problem studied recently by Crasta and Fragal\'a \cite{CF23}. We conclude with a discussion on capacity in Section \ref{sec:remarks_on_capacity}.

\section{A General Perspective}
\label{sec:gen}
\subsection{Weighted Brunn-Minkowski Theory}
Our first step is deeply intertwined with the weighted Brunn-Minkowski theory. See \cite{KL23,FLMZ23_1} and the references therein for a quick overview.

Let us now recall how to represent integration over the boundary of a compact, convex set as integration over the sphere via the Gauss map.
For a convex body $K\in\conbod$ and a Borel measure $\mu$ \textit{on the boundary of $K$} with density $\phi$, the weighted surface area of $K$ with respect to $\mu$ is defined by
\begin{equation}
    \label{eq:surface_mu}
    S^{\mu}_K(E)=\int_{N_K^{-1}(E)}\phi(x)d\mathcal{H}^{n-1}(x)
\end{equation}
for every Borel set $E \subset \s^{n-1}.$ Let $\Lambda$ be the set of all locally finite, regular Borel measures $\mu$ with non-negative Radon-Nikodym derivative, i.e., density:
\[
\mu \in \Lambda \iff \frac{d\mu(x)}{dx} = \phi(x), \text{ with } \phi \colon \R^n \to \R^+,\phi\in L^1_{\text{loc}}(\R^n). 
\]
We now extend the weighted surface area measure to Borel measures $\mu\in\Lambda.$ Let $B_2^n$ denote the Euclidean unit ball. Let $K\subset\R^n$ be a Borel set. Recall the classical definition of the Minkowski content of $K$ with respect to a Borel measure $\mu$ is given by
$$\mu^+(\partial K)\coloneqq\liminf_{\epsilon\to 0}\frac{\mu\left(K+\epsilon B_2^n\right)-\mu(K)}{\epsilon}.$$
It was folklore for quite some time (see, e.g. the work by K. Ball concerning the Gaussian measure \cite{Ball93}) that if $\mu\in\Lambda$ has continuous density and $K\in\conbod$, then the $\liminf$ is a limit and
\begin{equation}\mu^+(\partial K)=\lim_{\epsilon\to 0}\frac{\mu\left(K+\epsilon B_2^n\right)-\mu(K)}{\epsilon}=\int_{\partial K}\phi(x)d\mathcal{H}^{n-1}(x).
\label{eq_bd}
\end{equation}
This was shown rigorously by Livshyts \cite{GAL19}. More recently, it was shown by the first named author and Kryvonos \cite{KL23} that \eqref{eq_bd} holds for every $\mu\in\Lambda$ when $K\in\conbod$ (i.e. that the $\liminf$ is a limit and the integral formula holds), as long as $\phi$ contains $\partial K$ in its Lebesgue set. Under this minor assumption on $\mu$, we can with justification refer to $\mu^+(K)$ as the weighted surface area of $K$. Next, by the pushforward through $N_K$, we then define on $\s^{n-1}$ a corresponding Borel measure $S^{\mu}_K$ via \eqref{eq:surface_mu} to get $\mu^+(\partial K) = S^\mu_K(\s^{n-1})$. Livshyts \cite{GAL19} first did the even Minkowski problem for $S^\mu_K$ when $\mu$ is $\alpha$-homogeneous, $1/\alpha$-concave, $\alpha \geq n$ with density. Later, Huang, Xi, and Zhao \cite{HXZ21} did the even Minkowski problem when $\mu$ is the Gaussian measure; Langharst and Kryvonos \cite{KL23} followed this development by solving the even Minkowski problem when $\mu$ is any radially decreasing probability measure with density.

\subsection{Set-dependent Borel measures and their Minkowski problems}
\label{sec:set_dep}
We now view $\conbod$ as a space closed under the Hausdorff topology, and we view $\Lambda$ as a space equipped with the $L^1_{\text{loc}}$ topology. Assume now that, instead of a fixed measure $\mu \in \Lambda$, we have a continuous map  
\begin{equation}\label{eq:set_dep_meas}
    \begin{split}
         \conbod &\to \Lambda\\
    K &\mapsto \mu_K.
    \end{split}
\end{equation}
We will denote the density of $\mu_K$ as $\phi_K$. Heuristically, $\conbod$ is viewed as an indexing set for a collection of Borel measures. If $\mathcal{B}^n$ denotes the collection of Borel sets on $\R^n$, then we call the operator $\conbod\times \mathcal{B}^n\mapsto \R^+$ given by $(K,E)\mapsto\mu_K(E)$ for every $E\in\mathcal{B}^n$ and $K\in\conbod$ a \textit{set-dependent Borel measure}. Let us consider two examples. If we work with a constant map, i.e., there exists $\mu\in\Lambda$ such that $\mu_K=\mu$ for every $K\in\conbod,$ then $\mu_{K}(E)=\mu(E)$. 

A non-constant example is given instead by torsional rigidity: Let the open set $\Omega$ from \eqref{eq:tor_PDE} be $\text{int}(K)$ for a $K\in\conbod,$ and let $u_K$ solve \eqref{eq:tor_PDE} for this $K$. Then, we can define for every Borel set $E \subset \R^n$
\begin{equation}\label{eq:corr_tor}
    \mu_K(E)=\int_E |\nabla u_K(x)|^2\, dx.
\end{equation}
Notice, then, that $\tau(K)=\mu_K(K)$. Continuity of the map $K\mapsto \mu_K$ in this instance is far from trivial; see, e.g., \cite{CF10} for the torsional rigidity, \cite{DJ96_2} for the capacity, and \cite{CNSXYZ15,CS03} for the $p$-capacity. 

Anyway, returning to the generic case of a continuous map $K\mapsto \mu_K$ in \eqref{eq:set_dep_meas}: For every fixed $K$, we apply \eqref{eq_bd} to the $\mu_K$-measure of the body $K$ itself to obtain:
$$\mu_K^+(\partial K) = \int_{\partial K}\phi_K(x)\mathcal{H}^{n-1}(x)=\int_{\s^{n-1}}dS_K^{\mu_K}(u),$$
where the final equality follows from the Gauss map, i.e., $S_K^{\mu_K}$ is defined via \eqref{eq:surface_mu}:
\begin{equation}
    \label{eq:surface_mu_2}
    S^{\mu_K}_K(E)=\int_{N_K^{-1}(E)}\phi_K(x)d\mathcal{H}^{n-1}(x).
\end{equation}
We define a \textit{diagonal set-valued Borel measure} as $$\Md{K}=\mu_K(K).$$ 
The \textit{Minkowski problem of the functional $\mathcal{F}$} is therefore answering the question: Given a Borel measure $\nu$ on $\s^{n-1}$ (with some reasonable restrictions) and a functional $\mathcal{F}$, does there exist a convex body $K$ such that $d\nu=dS^{\mu_K}_K$? The Minkowski problem for torsional rigidity was solved by Colesanti and Fimiani \cite{CF10}.

 While we know that $K\mapsto \mu_K$ is a Borel measure with density $\phi_K$ that depends on $K$ (by hypothesis), the ideal situation would be that $K \mapsto \mu_K(K)$ is \textit{also} a Borel measure with density (applied to, but independent of, $K$), and thus we could apply the machinery from the weighted Brunn-Minkowski theory mentioned above to the study of $\mathcal{F}$. However, this is unfortunately not the case in general. 
 
 Next, we show that the definition $\Md{K}=\mu_K(K)$ is continuous as a functional on convex bodies. Indeed, suppose $K_i\to K$ in the Hausdorff metric. Then, by definition there exists $\mu_i,\mu\in\Lambda$ with locally integrable densities $\phi_i,\phi$ respectively such that $\phi_i\to\phi$ weakly and $\Md{K_i}=\mu_i(K_i).$ Notice that
\begin{align*}\bigg|\Md{K_i}-\Md{K}\bigg| &= \bigg|\int_{K_i}\phi_i(x)dx-\int_{K}\phi(x)dx\bigg| \leq
\\
&\bigg|\int_{K}\phi_i(x)dx-\int_{K}\phi(x)dx\bigg|+\bigg|\int_{K_i}\phi_i(x)dx-\int_{K}\phi_i(x)dx\bigg|.\end{align*}
Fix an arbitrary $\epsilon>0$. The first term is bounded by $\|\phi_i-\phi\|_{L^1(K)}$ which goes to zero (say, $i$ is large enough to that $\|\phi_i-\phi\|_{L^1(K)} \leq \epsilon/3$). For the second term, one has that
\begin{align*}\bigg|\int_{K_i}\phi_i(x)dx-\int_{K}\phi_i(x)dx\bigg| &\leq \bigg|\int_{K_i\triangle K}\phi_i(x)dx\bigg| 
\\
&\leq \int_{K_i\triangle K}|\phi_i(x)|dx 
\\
&\leq \int_{K_i\triangle K}|\phi(x)|dx + \epsilon/3 
\leq \frac{2}{3}\epsilon\end{align*}
for $i$ large enough, and consequently, $\Md{K_i}\to \Md{K}.$ 

We would like to assert that the functional $\mathcal{F}$ has the form
\begin{equation}
    \mathcal{F}(K)=\frac{1}{|\alpha|}\int_{\s^{n-1}}h_K(u)dS^{\mu_K}_K(u),
    \label{eq_had_push}
\end{equation}
where $\alpha\in\R\setminus\{0\}$ is independent of $K$. We see that torsional rigidity $\tau$, via \eqref{eq:had_tor} satisfies \eqref{eq_had_push} after performing a change of variable through the Gauss map and setting $\alpha=(n+2)$. From \eqref{eq_had_push}, we then define the variational measure of $K$ with respect to the functional $\mathcal{F}$, denoted $V_{\mathcal{F},K}$, as
\begin{equation}
    dV_{\mathcal{F},K}(u)=\frac{1}{|\alpha|}h_K(u)dS^{\mu_K}_K(u).
    \label{eq:cone}
\end{equation}
The \textit{even logarithmic Minkowski problem of the functional $\mathcal{F}$} is answering the following question: Given an even Borel measure $\nu$ (with some reasonable restrictions) and a functional $\mathcal{F}$, does there exist a symmetric convex body $K$ such that $d\nu(u)=dV_{\mathcal{F},K}(u)$? In the next subsection, we require some common-sense properties on $\mathcal{F}$ to guarantee the formula \eqref{eq_had_push} holds.

\subsection{Properties of $\mathcal{F}$} Recall that we defined a diagonal set-valued Borel measure as $\Md{K}=\mu_K(K)$, where $\mu_K$ is the map given by \eqref{eq:set_dep_meas}, and we have shown this definition is continuous. In addition to continuity, we say $\mathcal{F}$ is $\alpha$-homogeneous, $\alpha \in\R\setminus\{0\}$, if $\Md{tK}=t^{\alpha}\Md{K}$ for $t>0.$ The trivial example is the measure case, i.e., when $\Md{K}=\mu(K)$ for some fixed $\alpha$-homogeneous measure $\mu.$ A more interesting example is torsional rigidity, since $\tau(tK)=t^{n+2}\tau(K)$ for $t>0$. That is, torsional rigidity is a $(n+2)$-homogeneous, diagonal set-valued Borel measure. We prove our results for $\alpha$-homogeneous, diagonal set-valued Borel measures for $\alpha \neq 0$. We will need three additional properties. Appropriate justifications will follow.

\begin{itemize}
    \item A diagonal set-valued Borel measure $\mathcal{F}$ is \textit{monotonic}, or has property \eqref{A}, if
\begin{equation}\label{A}
    K \subseteq L \Rightarrow \text{sgn}(\alpha) \Md{K} \leq \text{sgn}(\alpha)\Md{L}.
    \tag{$\mathbf{A}$}
\end{equation}

    \item  We say that an $\alpha$-homogeneous diagonal set-valued Borel measure $\mathcal{F}$ has property \eqref{B} if there exists $C>0$ such that, for any $K\in\conbod$,
\begin{equation*}\label{B}
\Md{K}^\frac{1}{\alpha} \leq C\vol_n(K)^\frac{1}{n}.\tag{$\mathbf{B}$}
\end{equation*}
\end{itemize}
Concerning \eqref{B}, recall that torsional rigidity satisfies the following isoperimetric-type inequality, known as Saint-Venant inequality (see, e.g., \cite{PS51}):
\begin{equation}
    \left(\frac{\tau(K)}{\tau(B_2^n)}\right)^\frac{1}{n+2}\leq \left(\frac{\vol_n(K)}{\vol_n(B_2^n)}\right)^{\frac{1}{n}}.
    \label{eq_dSV}
\end{equation}
In general, isoperimetric inequalities guarantee suitable compactness for the variational approach to the solution of Minkowski problems. Thus, it is reasonable to require \eqref{B} to control $\Md{K}$ by $\vol_n(K)$. Property \eqref{A} plays a similar role: The two inequalities together provide a lower and an upper bound for $\Md{K}$. They exchange roles depending on $\text{sgn}(\alpha)$, and when their interaction does not follow \eqref{A} and \eqref{B}, our strategy fails. This is the case, for example, for the capacity functional. See Section \ref{sec:remarks_on_capacity} for more details.

The third property is property \eqref{C}, and this is a bit more delicate. It is by this property that $\mathcal{F}$ has the representation \eqref{eq_had_push}. The following facts are reported, for example, in \cite{Sh1}. For every strictly positive $f\in C(\s^{n-1})$, the \textit{Wulff shape} of $f$ is the convex body given by
	\begin{equation}
	    [f]=\{x\in\R^n:\langle x,u\rangle\leq f(u) \; \text{for all}\; u  \in \s^{n-1}\}.
	\end{equation}
	One has that, for a convex body $K$ containing the origin in its interior, $[h_K]=K.$ Since $f$ is positive, $[f]$ is a convex body containing the origin in its interior. Furthermore, if $f$ is even, then $[f]$ is symmetric. Next, for $f\in C(\s^{n-1})$, Aleksandrov \cite{AL} defined a perturbation of $K$ to be the Wulff shape of the function
	$$h_t(u)=h_K(u)+tf(u),$$
	where $t\in (-\delta,\delta)$, $\delta$ small enough so that $h_t$ is positive for all $u$. From here, Aleksandrov showed his variational formula:
	\begin{equation}
 \diff{\vol_n([h_{t}])}{t}\bigg|_{t=0}=\lim_{t\to 0}\frac{\vol_n([h_t])-\vol_n(K)}{t}=\int_{\s^{n-1}}f(N_K(x))d\mathcal{H}^{n-1}(x).
\label{eq:vari_vol}
        \end{equation}
It is natural to ask if other functionals besides volume have a variational formula of type \eqref{eq:vari_vol}. Indeed, Colesanti and Fimiani \cite[Theorem 4.1]{CF10} showed, for $K\in\conbod$ and $f\in C(\s^{n-1})$, that
$$ \lim_{t\to 0}\frac{\tau([h_K+tf])-\tau(K)}{t}=\int_{\partial K}f(N_K(x))|\nabla u_K(x)|^2d\mathcal{H}^{n-1}(x),$$
where $u_K$ solves \eqref{eq:tor_PDE}. With more generality, for an $\alpha$-homogeneous diagonal set-valued Borel measure $\mathcal{F}$ we want to require for every $K\in\conbod$ and $f\in C(\s^{n-1})$, there exists a non-negative, locally integrable function $\phi_K$ on $\partial K$ such that
\begin{equation}
\label{eq:og_C}
    \frac{d}{dt}\mathcal{F}([h_K+tf])\Big|_{t=0}=\text{sgn}(\alpha)\int_{\partial K} f(N_K(x)) \phi_K(x) d\mathcal{H}^{n-1}(x).
\end{equation}
Notice by using the pushforward of $N_K$, we can write
$$\frac{d}{dt}\mathcal{F}([h_K+tf])\Big|_{t=0}=\text{sgn}(\alpha)\int_{\s^{n-1}} f(u) dS_K^{\mu_K}(u).$$ For our purposes, we will consider not $h_K+tf$ but $h_Ke^{tf}$, which is a logarithmic perturbation of $h_K$ by $f$, see, e.g., \cite{BLYZ12,HLYZ16}. Notice, however, that $$h_Ke^{tf}=h_K + tfh_K + o(t^2) \quad \text{and} \quad h_K+tf = h_Ke^{t\frac{f}{h_K}}+o(t^2).$$  Additionally, recall the classical fact that if $f_i\to f$ with respect to the sup-norm on $C(\s^{n-1}),$ then $[f_i]\to [f]$ in the Hausdorff metric. Consequently, the operator $\Md{[\cdot]}$ is a continuous functional on $C(\s^{n-1})$. Therefore, a variational formula for a perturbation of the form $h_K+tf$ is equivalent to a variational formula for a perturbation of the form $h_Ke^{tf}$. We are thus justified to require $\eqref{C}$, which is given as follows.

\begin{itemize}
\item An $\alpha$-homogeneous diagonal set-valued Borel measure $\mathcal{F}$ is said to have \textit{Hadamard derivative}, or has property \eqref{C}, if for every $K\in\conbod$ and $f\in C(\s^{n-1})$, there exists a Borel measure $\mu_K$ with a non-negative, locally integrable density $\phi_K$ on $\partial K$ such that, by setting, $K_t=[h_Ke^{tf}]$,
\begin{equation}
    \lim_{t\to 0}\frac{\Md{K_t}-\Md{K}}{t}=\text{sgn}(\alpha)\int_{\s^{n-1}}h_K(u)f(u)dS^{\mu_K}_{K}(u).
    \label{C}
    \tag{$\mathbf{C}$}
\end{equation}
\end{itemize}
\noindent Again, $\eqref{C}$ is actually equivalent to \eqref{eq:og_C}. It is now trivial that property $\eqref{C}$ together with homogeneity yields $\mathcal{F}$ has a representation of the form \eqref{eq_had_push}. Indeed, for every $t>0, \Md{(1+t)K}=(1+t)^\alpha\Md{K}$. Taking a derivative and evaluating at $t=0$ yields, by using \eqref{eq:og_C},
$$\Md{K} = \frac{1}{\alpha}\frac{d}{dt}\Md{(1+t)K}\Big|_{t=0}=\frac{1}{|\alpha|}\int_{\partial K} h_K(N_K(x)) \phi_K(x) d\mathcal{H}^{n-1}(x),$$
and then the representation \eqref{eq_had_push} follows by using the Gauss map. Now that we have introduced diagonal set-valued Borel measures, we are ready to state our main theorem.

\begin{theorem}
\label{mainTh}
    Let $\nu$ be an even, finite, and positive Borel measure over $\s^{n-1}$ such that it satisfies the strict subspace concentration inequality \eqref{eq:SSCC}. Fix $\alpha\neq 0$. Let $\mathcal{F}$ be a set-dependent, $\alpha$-homogeneous Borel measure satisfying properties $\eqref{A},$ $\eqref{B}$, and $\eqref{C}$. Let $V_{\mathcal{F},K}$ be the variational measure of $K$ with respect to the functional $\mathcal{F}$ given by \eqref{eq:cone}. Then, there exists a $K\in\conbod_e$ such that
    $d\nu(u)=dV_{\mathcal{F},K}.$ In particular, one has $$\nu(\s^{n-1})=\mathcal{F}(K).$$
\end{theorem}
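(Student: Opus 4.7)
The plan is to adapt the variational approach of B\"or\"oczky, Lutwak, Yang, and Zhang \cite{BLYZ13} to the set-dependent framework, studying the constrained extremal problem
$$\sup_{Q \in \conbod_e,\ \Md{Q}=|\nu|}\ \int_{\s^{n-1}} \log h_Q(u)\, d\nu(u),$$
(with sup or inf dictated by the sign of $\alpha$), where $|\nu|=\nu(\s^{n-1})$. By the $\alpha$-homogeneity of $\mathcal{F}$, every symmetric convex body can be rescaled to satisfy $\Md{Q}=|\nu|$, so the admissible class is nonempty, and property \eqref{C} ensures the constraint is smoothly differentiable in the Wulff-shape perturbation directions required for the Lagrange analysis.

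The heart of the argument is to show that an optimizing sequence $\{Q_j\}\subset\conbod_e$ in the admissible class admits a non-degenerate subsequential limit. Property \eqref{B} combined with the constraint yields a uniform two-sided comparison between $\vol_n(Q_j)$ and $|\nu|$, while property \eqref{A} prevents $Q_j$ from being contained in arbitrarily thin slabs of fixed direction. The obstruction to compactness is the classical \emph{collapse}: elongation of $Q_j$ in some directions compensated by flattening in others. I would pass to a subsequence, consider the John ellipsoids $E_j$ with semi-axes $\lambda_1^j \le \cdots \le \lambda_n^j$ along orthonormal frames, and mimic the BLYZ13 decomposition of $\s^{n-1}$ into regions $\Omega_i^j$ on which the $i$-th axis dominates. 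This yields an upper bound of the form $\int \log h_{Q_j}\, d\nu \le \sum_i \nu(\Omega_i^j)\log \lambda_i^j + O(1)$. Bootstrapping $\Md{Q_j} = |\nu|$ through \eqref{B} to a fixed constraint on $\prod_i \lambda_i^j$, the argument shows that the functional cannot approach its extremum while some $\lambda_i^j \to 0$ unless $\nu$ charges a proper subspace $H$ with mass at least $\tfrac{1}{n}|\nu|\dim(H)$, which is precisely the failure of the strict subspace concentration condition \eqref{eq:SSCC}. Hence the diameters of the $Q_j$ are uniformly bounded; Blaschke selection yields a limit $K \in \conbod_e$, and the continuity of the log-integral together with that of $\mathcal{F}$ (established in Section \ref{sec:set_dep}) guarantees $K$ is an extremum.

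With the extremizer $K$ in hand, property \eqref{C} delivers the Euler--Lagrange equation. For even $f \in C(\s^{n-1})$ and small $|t|$, the Wulff shape $K_t := [h_K e^{tf}]$ satisfies $\log h_{K_t} = \log h_K + tf + o(t)$, so $\frac{d}{dt}\int \log h_{K_t}\, d\nu\big|_{t=0} = \int f\, d\nu$, while \eqref{C} gives $\frac{d}{dt}\Md{K_t}\big|_{t=0} = \text{sgn}(\alpha) \int h_K f\, dS_K^{\mu_K}$. The Lagrange multiplier rule applied at $K$ produces
$$\int_{\s^{n-1}} f\, d\nu = \lambda\, \text{sgn}(\alpha) \int_{\s^{n-1}} h_K f\, dS_K^{\mu_K}$$
for every even $f \in C(\s^{n-1})$. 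Testing with $f \equiv 1$ and invoking the representation \eqref{eq_had_push} forces $\lambda = 1/\alpha$, whence $d\nu = \frac{1}{|\alpha|} h_K\, dS_K^{\mu_K} = dV_{\mathcal{F}, K}$, and integration yields $\nu(\s^{n-1}) = \Md{K}$. The principal obstacle is the collapse analysis of the middle step: the BLYZ13 decomposition is calibrated to volume, and adapting it requires checking that the $\alpha$-homogeneity of $\mathcal{F}$ and the $\vol_n$-domination in \eqref{B} interact cleanly with the SSCC exponents without introducing sign ambiguity between the two cases $\alpha>0$ and $\alpha<0$.
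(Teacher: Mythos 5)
Your approach is essentially the same as the paper's: minimize $\Phi_\nu(Q)=\int_{\s^{n-1}}\log h_Q\,d\nu$ over $\conbod_e$ subject to $\mathcal{F}(Q)=|\nu|$, prove compactness of a minimizing sequence via John's theorem combined with properties~\eqref{A},~\eqref{B} and the subspace concentration condition, then extract the Euler--Lagrange identity from the Wulff-shape perturbation and property~\eqref{C}. Two small points of comparison and one gap are worth noting.

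First, where you invoke the Lagrange multiplier rule and then pin down $\lambda=1/\alpha$ by testing $f\equiv 1$ against \eqref{eq_had_push}, the paper avoids constraint qualification issues entirely by working with the degree-$0$-homogeneous functional
$\Gamma(q)=\mathcal{F}([q])^{-1/\alpha}\exp\bigl(\int_{\s^{n-1}}\log q\,d\nu\bigr)$
on $C_e^+(\s^{n-1})$; its unconstrained critical points coincide with the constrained ones, and the stationarity computation directly produces $\frac{1}{|\alpha|}h_{K_0}\,dS^{\mu_0}_{K_0}=d\nu$ with the correct sign absorbed into $-\tfrac{1}{\alpha}\cdot\operatorname{sgn}(\alpha)=-\tfrac{1}{|\alpha|}$. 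The two routes are equivalent, but the $\Gamma$-route also delivers, for free, that the constrained infimum equals the unconstrained one.

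Second, and this is the gap: you write that the extremal problem is a supremum or an infimum ``dictated by the sign of $\alpha$.'' It must be the infimum for \emph{both} signs. The compactness step rests on the implication (from \cite[Lemma~6.2]{BLYZ13}) that an unbounded sequence of bodies with product of John cross-polytope heights bounded below forces $\Phi_\nu\to+\infty$; this contradicts the minimizing property because the infimum is bounded above by the value on a rescaled ball, as in \eqref{eq:bound}. It cannot contradict a maximizing sequence. Tracking the sign of $\alpha$: when $\alpha>0$, properties~\eqref{A} and~\eqref{B} with $\mathcal{F}(Q_l)=1$ give $n^{-\alpha}\le\mathcal{F}(C_l)\le 1$ and hence $\vol_n(C_l)\ge c>0$; when $\alpha<0$, the inequalities flip to $1\le\mathcal{F}(C_l)\le n^{-\alpha}$, but \eqref{B} (which now reads $\mathcal{F}(K)^{-1/|\alpha|}\le C\vol_n(K)^{1/n}$) still yields the needed lower volume bound. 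So the blow-up direction of $\Phi_\nu$ is $+\infty$ in both cases, and the infimum is the right choice in both. With this correction your argument lines up with the paper's.

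A final cosmetic difference: you phrase the John comparison in terms of ellipsoid semi-axes $\lambda_i^j$, whereas the paper (following \cite{BLYZ13}) uses the inscribed cross-polytope $C_l\subset Q_l\subset nC_l$ with heights $h_{i,l}$ and the explicit volume formula $\vol_n(C_l)=\tfrac{2^n}{n!}\prod_i h_{i,l}$; these are interchangeable up to dimensional constants, but the cross-polytope version makes the passage to \cite[Lemma~6.2]{BLYZ13} immediate.
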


\subsection{The Eigenvalue of the Laplacian}
\label{sec:diagonal}
We emphasized torsional rigidity in the introduction when discussing diagonal set-valued Borel measures. We now provide a further example: The first eigenvalue of the Laplacian. For $K\in\conbod,$ consider the following eigenvalue problem:

\begin{equation}
\label{eq:PDE_eigen}
\begin{cases}-\Delta v(x) = \lambda_1(K) v(x) & \text { for } x \in \text{int}(K), \\ \;\quad v(x)=0 & \text { for } x \in \partial K.\end{cases}
\end{equation}
Being a linear PDE, any multiple of a solution is a solution. We will consider the unique solution to \eqref{eq:PDE_eigen} that satisfies $\int_{K} v^2 dx =1$; we denote this solution as $v_K$. Here, the constant $\lambda_1(K)$ is the smallest non-trivial eigenvalue of the Laplacian and is known as the \textit{principal eigenvalue} of $K$. The principal eigenvalue is $(-2)$-homogeneous. One can easily verify that $\lambda_1$ is monotonically decreasing, i.e., satisfies property $\eqref{A}.$
It also satisfies the \textit{Faber-Krahn inequality}
$$\frac{\lambda_1(B_2^n)}{\lambda_1(K)}\leq \left(\frac{\vol_n(K)}{\vol_n(B_2^n)}\right)^{2/n}.$$
Therefore, the principal eigenvalue also has property $\eqref{B}.$ 
Jerison showed \cite[Theorem 7.5]{DJ96_2} that $\lambda_1(K)$ has the following formula.
\begin{equation}
\begin{split}
    \lambda_1(K)&=\frac{1}{2} \int_{\partial K} h_{K}(N_K(x))|\nabla v_K(x)|^2d\mathcal{H}^{n-1}(x) 
    \\
    &=\frac{1}{2}\int_{\s^{n-1}}h_K(u) dS^{\mu_K}_K (u),
    \label{eigen_form}
    \end{split}
\end{equation}
where $S^{\mu_K}_K$ is a Borel measure on $\s^{n-1}$ given by the pushforward of $|\nabla v_K(x)|^2$ from $\partial K$ to $\s^{n-1}$ via the Gauss map (see \eqref{eq:surface_mu_2}).

Thus, \eqref{eigen_form} and the above discussion shows that the principal eigenvalue is a $(-2)-$homogeneous, set-dependent Borel measure satisfying properties $\eqref{A},\eqref{B},$ and $\eqref{C}$. The variational measure in this instance is then
\begin{equation} 
\label{eq:cone_lambda}
dV_{\lambda_1,K}(u)\coloneqq\frac{1}{2}h_K(u)dS_K^{\mu_K}(u).\end{equation}
Notice that $V_{\lambda_1,K}(\s^{n-1})=\lambda_1(K).$ Thus, in this instance, the variational measure will be called the Poincar\'e measure. Our main theorem below, Theoerm~\ref{mainTh}, therefore implies the following. 
\begin{theorem}
\label{t:first_eigen}
    Let $\nu$ be an even, finite Borel measure on $\s^{n-1}$ such that $\nu$ satisfies the subspace concentration condition. Then, there exists an origin symmetric convex body $K\in\conbod_e$ such that $\nu$ is the Poincar\'e measure of $K$, i.e. $d\nu(u)=dV_{\lambda_1,K}(u)$. In particular, one has
    $$\nu\left(\s^{n-1}\right)=\lambda_1\left(K\right).$$
\end{theorem}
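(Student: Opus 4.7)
The plan is to deduce Theorem~\ref{t:first_eigen} as a direct corollary of Theorem~\ref{mainTh} applied to $\mathcal{F}=\lambda_1$ with $\alpha=-2$. The bulk of the proof will consist in verifying that the principal eigenvalue fits into the framework of set-dependent, $\alpha$-homogeneous Borel measures satisfying \eqref{A}, \eqref{B}, and \eqref{C}; most of the ingredients have been collected in Section~\ref{sec:diagonal}, so the work is largely bookkeeping, with one genuine analytic input.

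First I would fix the set-dependence: for each $K\in\conbod$ define $\mu_K\in\Lambda$ by $d\mu_K(x)=\chi_K(x)|\nabla v_K(x)|^2\,dx$, where $v_K$ is the unique $L^2$-normalized positive solution to \eqref{eq:PDE_eigen}. Continuity of $K\mapsto \mu_K$ in the $L^1_{\mathrm{loc}}$ topology follows from the stability of the Dirichlet problem on convex domains under Hausdorff convergence, together with elliptic regularity up to the boundary (convex bodies are Lipschitz, hence Wiener-regular); this is classical and is the principal analytic input. The $(-2)$-homogeneity $\lambda_1(tK)=t^{-2}\lambda_1(K)$ is immediate from the substitution $v_{tK}(x)=t^{-n/2}v_K(x/t)$.

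Next, I would verify \eqref{A}, \eqref{B}, \eqref{C}. Since $\operatorname{sgn}(\alpha)=-1$, property \eqref{A} reads $K\subseteq L\Rightarrow \lambda_1(K)\geq \lambda_1(L)$, which is the usual domain monotonicity from the Rayleigh quotient. Property \eqref{B} reads $\lambda_1(K)^{-1/2}\leq C\,\vol_n(K)^{1/n}$, which is exactly the Faber--Krahn inequality recorded before \eqref{eigen_form}, with $C=\lambda_1(B_2^n)^{-1/2}\vol_n(B_2^n)^{-1/n}$. For property \eqref{C}, the classical Hadamard variational formula for the first Dirichlet eigenvalue yields, for $g\in C(\s^{n-1})$,
\[
\frac{d}{dt}\lambda_1([h_K+tg])\bigg|_{t=0}=-\int_{\partial K} g(N_K(x))|\nabla v_K(x)|^2\,d\mathcal{H}^{n-1}(x);
\]
see \cite{DJ96_2}. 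Choosing $g=h_K f$ and exploiting the equivalence, already noted in the text, between additive and logarithmic perturbations via $h_K e^{tf}=h_K+tfh_K+o(t^2)$, we obtain exactly
\[
\lim_{t\to 0}\frac{\lambda_1([h_K e^{tf}])-\lambda_1(K)}{t}=-\int_{\s^{n-1}}h_K(u)f(u)\,dS_K^{\mu_K}(u),
\]
which is \eqref{C} with $\operatorname{sgn}(\alpha)=-1$ and the measure $\mu_K$ introduced above.

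With all three properties in hand, Theorem~\ref{mainTh} produces some $K\in\conbod_e$ satisfying $d\nu(u)=dV_{\lambda_1,K}(u)$, and the identity $\nu(\s^{n-1})=\lambda_1(K)$ then follows by integrating both sides and invoking \eqref{eigen_form}. The only step whose verification is not automatic from material already quoted in the paper is the Hausdorff-continuity of $K\mapsto |\nabla v_K|^2\mathcal{H}^{n-1}|_{\partial K}$ and the Hadamard formula itself; both are well established in the elliptic literature for convex domains, so the obstacle is more bibliographical than technical.
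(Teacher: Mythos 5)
Your proposal is correct and follows exactly the route taken by the paper: verify that $\lambda_1$ is a $(-2)$-homogeneous diagonal set-valued Borel measure satisfying \eqref{A} (domain monotonicity), \eqref{B} (Faber--Krahn), and \eqref{C} (Jerison's Hadamard variational formula, \cite[Theorem 7.5]{DJ96_2}), and then invoke Theorem~\ref{mainTh}. The only difference is one of exposition: you spell out the normalization of the eigenfunction, the scaling computation $v_{tK}(x)=t^{-n/2}v_K(x/t)$ for the $(-2)$-homogeneity, and the passage from the additive Hadamard formula to the logarithmic perturbation form \eqref{C} via $h_Ke^{tf}=h_K+tfh_K+o(t^2)$, whereas the paper's Section~\ref{sec:diagonal} records the three properties in two lines each and cites Jerison for \eqref{eigen_form}.

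One small caveat worth flagging, though it afflicts the paper equally: the density $\chi_K|\nabla v_K|^2$ you assign to $\mu_K$ is a bulk density on $\mathbb{R}^n$, and the measure $S_K^{\mu_K}$ in \eqref{C} and \eqref{eigen_form} actually involves the \emph{boundary trace} of $|\nabla v_K|^2$ on $\partial K$, whose existence $\mathcal{H}^{n-1}$-a.e. on $\partial K$ is a nontrivial fact (the analogue of Dahlberg's theorem quoted for the torsional case). Your closing paragraph correctly identifies the continuity of $K\mapsto\mu_K$ and the Hadamard formula as the genuine analytic inputs that are quoted rather than proved; you could add the boundary-trace regularity of $\nabla v_K$ to that list. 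None of this changes the conclusion: the argument is sound and matches the paper.
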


\section{All Down the Line: The Main result}
\label{sec:main}
In this section, we prove Theorem~\ref{mainTh}. We will follow the scheme from \cite{BLYZ13}. First, we consider the following minimization problem. Let $\nu$ be a finite even Borel measure on $\s^{n-1}$ with total mass $|\nu|>0$. Define the functional $\Phi_\nu: \conbod_e \rightarrow \R$ given by
\begin{equation}
    \Phi_\nu(K)=\int_{\s^{n-1}} \log h_K(u) d \nu(u).
    \label{eq:func}
\end{equation} 
We next will consider the following minimization problem and show that its solution is the variational measure of the functional $\mathcal{F}$:
\begin{equation}
    \inf_{Q\in\conbod_e} \left\{\Phi_\nu(Q): \mathcal{F}(Q)=|\nu|\right\}.
    \label{eq:min}
\end{equation}
It will be convenient to introduce the notation $C_e^{+}(\s^{n-1})$ for the set of all positive, even and continuous functions on the sphere.
\begin{lemma}
\label{main_lemma}
    Let $\nu$ be a finite, even Borel measure on $\s^{n-1}$ such that $|\nu|>0.$ Fix $\alpha\neq 0$. Let $\mathcal{F}$ be an $\alpha$-homogeneous set-dependent Borel measure satisfying property $\eqref{C}$ for sets in $\conbod_e$. Then, if $K_0\in\conbod_e$ is symmetric such that $\Md{K_0}=|\nu|$ and
    \begin{equation}
         \Phi_{\nu}(K_0)=\inf_{Q\in\conbod_e} \left\{\Phi_\nu(Q): \mathcal{F}(Q)=|\nu|\right\},
    \end{equation}
    then $\nu$ is the variational measure for $K_0$ associated with $\mathcal{F}$ given by \eqref{eq:cone}.
\end{lemma}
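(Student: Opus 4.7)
The plan is a standard Lagrange-multiplier / logarithmic-variation argument, following the blueprint in \cite{BLYZ13}. I want to perturb $K_0$ inside the constraint set $\{Q\in\conbod_e:\mathcal{F}(Q)=|\nu|\}$, use minimality to kill the first derivative of $\Phi_\nu$, and then identify the resulting linear functional on test functions as pairing against $V_{\mathcal{F},K_0}$.

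Concretely, pick any even $f\in C(\s^{n-1})$ and set $h_t(u)=h_{K_0}(u)e^{tf(u)}$ for $t$ in a small symmetric interval around $0$, chosen so that $h_t>0$ throughout. Let $K_t=[h_t]$ be the associated Wulff shape; since $h_{K_0}$ and $f$ are even, $h_t$ is even, hence $K_t\in\conbod_e$. To force the constraint I would rescale, defining $\widetilde K_t=\lambda_t K_t$ with $\lambda_t=(|\nu|/\mathcal{F}(K_t))^{1/\alpha}$; by $\alpha$-homogeneity $\mathcal{F}(\widetilde K_t)=|\nu|$, and by continuity of $\mathcal{F}$ along the curve $t\mapsto K_t$ (which is continuous in Hausdorff distance because $h_t\to h_{K_0}$ uniformly), $\lambda_t$ is well defined and smooth near $t=0$ with $\lambda_0=1$.

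The heart of the argument is the auxiliary function
\begin{equation*}
\Psi(t):=|\nu|\log\lambda_t+\int_{\s^{n-1}}\log h_t\, d\nu=|\nu|\log\lambda_t+\Phi_\nu(K_0)+t\int_{\s^{n-1}}f\,d\nu.
\end{equation*}
Using the basic Wulff-shape inequality $h_{K_t}\le h_t$, one has $\Phi_\nu(\widetilde K_t)=|\nu|\log\lambda_t+\Phi_\nu(K_t)\le\Psi(t)$, while minimality of $K_0$ gives $\Phi_\nu(\widetilde K_t)\ge\Phi_\nu(K_0)=\Psi(0)$. Thus $\Psi(t)\ge\Psi(0)$ for all small $t$ of either sign, so $\Psi'(0)=0$. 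Computing $\Psi'(0)$ requires only $\frac{d}{dt}\log\lambda_t|_{t=0}=-\tfrac{1}{\alpha|\nu|}\frac{d}{dt}\mathcal{F}(K_t)|_{t=0}$, and property \eqref{C} together with $\mathcal{F}(K_0)=|\nu|$ evaluates the last derivative to $\mathrm{sgn}(\alpha)\int h_{K_0}f\,dS^{\mu_{K_0}}_{K_0}=\alpha\int f\,dV_{\mathcal{F},K_0}$. Plugging in,
\begin{equation*}
0=\Psi'(0)=\int_{\s^{n-1}}f\,d\nu-\int_{\s^{n-1}}f\,dV_{\mathcal{F},K_0}.
\end{equation*}
Since this holds for every even $f\in C(\s^{n-1})$, and both $\nu$ and $V_{\mathcal{F},K_0}$ (by symmetry of $K_0$) are even Borel measures on $\s^{n-1}$, a standard density/Riesz representation argument gives $\nu=V_{\mathcal{F},K_0}$.

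The step I expect to require the most care is justifying the differentiability of $t\mapsto\mathcal{F}(K_t)$ and the interchange needed to write $\frac{d}{dt}\mathcal{F}(K_t)|_{t=0}$ in the form supplied by \eqref{C}: property \eqref{C} is stated for the family $K_t=[h_{K_0}e^{tf}]$ exactly, so this is built in, but one still needs the map $t\mapsto\lambda_t$ (hence $t\mapsto\widetilde K_t$) to vary smoothly enough for the comparison $\Phi_\nu(\widetilde K_t)\ge\Phi_\nu(K_0)$ to be genuinely a two-sided bound near $t=0$. Everything else---the Wulff-shape inequality, evenness bookkeeping, the $\alpha$-homogeneity identity, and the passage from $\int f\,d\nu=\int f\,dV_{\mathcal{F},K_0}$ on even test functions to equality of even measures---is routine.
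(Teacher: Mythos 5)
Your proof is correct and is essentially the same Lagrange-multiplier argument as the paper's: the paper packages the constraint-normalization as the degree-zero-homogeneous functional $\Gamma(q)=\mathcal{F}([q])^{-1/\alpha}\exp(\int\log q\,d\nu)$ and differentiates $\Gamma(h_{K_0}e^{tg})$ at $t=0$, which is exactly your $\Psi(t)=\log\Gamma(h_t)$ (up to the constant $\frac{|\nu|}{\alpha}\log|\nu|$) once you unwind $\log\lambda_t=\frac{1}{\alpha}(\log|\nu|-\log\mathcal{F}(K_t))$. The Wulff inequality, the use of property~\eqref{C}, and the evenness bookkeeping match the paper's proof step for step.
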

\begin{proof}
Via the homogeneity of $\Md{\cdot},$ we may assume that $\nu$ is a probability measure on the sphere. For $q\in C_e^{+}(\s^{n-1})$, define $$\Gamma(q)\coloneqq\frac{1}{\Md{[q]}^{1/\alpha}}\exp\left(\int_{\s^{n-1}}\log(q)d\nu\right).$$
From the definition of $\mathcal{F}$, $\Md{[q]}$ a continuous functional on $C_e^{+}(\s^{n-1})$. Furthermore, we see that $\Gamma(q)$ is homogeneous of degree 0, i.e. $\Gamma(sq)=\Gamma(q)$ for all $s>0.$ Next, consider the minimization problem 
\begin{equation}
    \inf \left\{\Gamma(q): q \in C_e^{+}\left(\s^{n-1}\right)\right\}.
    \label{eq:min_2}
\end{equation}

We first show that the solution to this minimization problem is obtained among support functions of symmetric convex bodies. Indeed, for $q\in C_e^{+}\left(\s^{n-1}\right),$ one has $\Md{[q]}=\Md{[h_{[q]}]}$ and yet $h_{[q]} \leq q$ point-wise. Therefore, $\Gamma(h_{[q]}) \leq \Gamma(q).$ From the fact that $\Gamma$ is homogeneous of degree 0 and that we can restrict our search to support functions of origin symmetric convex bodies, we obtain that
$$\inf \left\{\Gamma(q): q \in C_e^{+}\left(\s^{n-1}\right)\right\}=\inf_{Q\in\conbod_e} \left\{e^{\Phi_\nu(Q)}: \mathcal{F}(Q)=|\nu|\right\}.$$
The infimum on the right-hand side is obtained at $h_{K_0}$ by hypothesis. Consequently, we obtain that
$$\inf \left\{\Gamma(q): q \in C_e^{+}\left(\s^{n-1}\right)\right\}=\Gamma(h_{K_0}).$$
Next, fix some arbitrary even and continuous function $g$ on $\s^{n-1}$. Define the family $h_t=h_{K_0}e^{tg},$ and let $K_t=[h_t].$ Then, via property $\eqref{C},$ one has
$$\diff{\Md{K_t}}{t}\bigg|_{t=0}=\text{sgn}(\alpha)\int_{\s^{n-1}}gh_{K_0}dS^{\mu_{0}}_{K_0},$$
where $\Md{K_0}=\mu_0(K_0).$ Therefore, we obtain that
$$\Gamma\left(h_t\right)=\Md{K_t}^{-1 / \alpha} \exp \left(\int_{\s^{n-1}} \log \left(h_{K_0} e^{t g}\right) d \nu\right)$$
is differentiable at $t=0.$ From the chain rule and $\eqref{C}$, we obtain
\begin{equation}
\diff {\Gamma\left(h_t\right)}{t}\bigg|_{t=0}=\exp \left(\int_{\s^{n-1}} \log h_{K_0} d \nu\right)\left[-\frac{1}{|\alpha|} \int_{\s^{n-1}} g h_{K_0} d S^{\mu_0}_{K_0}+\int_{\s^{n-1}} g d \nu\right].
\label{eq:M_deriv}
\end{equation}
On the other hand, the fact that $\Gamma(h_{K_0})$ minimizes \eqref{eq:min_2} shows that
$$\diff{\Md{K_t}}{t}\bigg|_{t=0}=0.$$
Combining this fact with \eqref{eq:M_deriv} we obtain, since $g$ is arbitrary,
$$\frac{1}{|\alpha|}h_{K_0} d S^{\mu_0}_{K_0}=d \nu,$$
as desired.
\end{proof}
\noindent We are now ready to prove the main theorem by showing there exists a symmetric convex body $K_0$ solving \eqref{eq:min}.
\begin{proof}[Proof of Theorem~\ref{mainTh}]
We first show that there exists a symmetric convex body $K\in\conbod_e$ such that
\begin{equation}
\int_{\s^{n-1}} \log h_K(u) d \nu(u)=\inf_{Q\in\conbod_e} \left\{\int_{\s^{n-1}} \log h_Q(u) d \nu(u): \mathcal{F}(Q)=|\nu|\right\}.
\label{solving}
\end{equation}

From the homogeneity of $\mathcal{F}(\cdot),$ we can again assume that $\nu$ is a probability measure. Consider a sequence $\{Q_l\}\subset\conbod_e$ such that $\Md{Q_l}=1$ and 
$$\lim_{l\to\infty}\Phi_{\nu}(Q_l)=\inf_{Q\in\conbod_e} \left\{\Phi_\nu(Q): \mathcal{F}(Q)=1\right\}.$$

\noindent Let $m_n\coloneqq\Md{B_2^n}$, and set $B_m=m_n^{-1/\alpha}B_2^n$ so that $\Md{B_m}=1.$ Notice that 
\begin{equation}\Phi_{\nu}(B_m)=-\frac{1}{\alpha}\log{m_n}.
\label{eq:bound}
\end{equation}
Consequently,
$$\lim_{\ell\to\infty}\Phi_{\nu}(Q_\ell) \leq -\frac{1}{\alpha}\log{m_n}.$$
Following the approach from \cite[Theorem 6.3]{BLYZ13}, since each $Q_l$ is non-empty, there exists cross-polytopes (via John's theorem) $C_l$ such that $$C_l \subset Q_l \subset n C_l, \quad C_l=\left[\pm h_{1, l} u_{1, l}, \ldots, \pm h_{n, l} u_{n, l}\right]$$ for some set $\{u_{i,l}\}_{i=1}^n\subset \s^{n-1},$ where $h_{i, l}=h_{C_l}\left(u_{i, l}\right).$
Furthermore, the indices are indexed so that
$h_{1, l} \leq \cdots \leq h_{n, l}.$ By way of contradiction, suppose the sequence $\{Q_l\}$ is not bounded. 
Then, the sequence $\{C_l\}$ is not bounded. Therefore, by passing to a subsequence if need be, one has
$$\lim_{l\to\infty}h_{n,l}=\infty.$$

On the other hand, since $\Md{Q_l}=1$ and $\Md{\cdot}$ satisfies property $\eqref{A}$ one has that $n^{-\alpha} \leq \Md{C_l} \leq 1$ if $\text{sgn}(\alpha) >0$ and $1 \leq \Md{C_l} \leq n^{-\alpha}$ if $\text{sgn}(\alpha) <0$. In either case, $\Md{\cdot}$ satisfying property $\eqref{B}$ implies there exists a sequence of numbers $A_l$ bounded uniformly away from $0$ and $\infty$ such that $\vol_n(C_l) \geq A_l.$ Then, from the formula of the volume of a cross-polytope, we obtain that
\begin{equation}
\label{supp_prod}
\prod_{i=1}^nh_{i,l}=\frac{n!\vol_n(C_l)}{2^n} \geq \frac{n!A_l}{2^n}\end{equation}
Notice that, with $\widetilde{C_l}=\left(\frac{n!A_l}{2^n}\right)^{-\frac{1}{n}}C_l$, $$\Phi_\nu\left(\widetilde{C_l}\right)=\int_{\s^{n-1}} \log h_{\widetilde{C_l}}(u) d \nu(u)=\frac{1}{n}\log\left(\frac{2^n}{n!A_l}\right)+\Phi_\nu(C_l).$$

 One then obtains from \eqref{supp_prod} and \cite[Lemma 6.2]{BLYZ13} that $\{\Phi_{\nu}(\widetilde{C}_l)\}$ is not bounded from above. But this implies $\{\Phi_{\nu}(Q_l)\}$ is not bounded from above, which contradicts \eqref{eq:bound} for $l$ large enough. Thus, we must have that $\{Q_l\}$ is bounded. From the Blaschke Selection Theorem \cite{Sh1}, $\{Q_l\}$ has a subsequence which converges to an origin symmetric convex body $K\in\conbod_e$, and by construction, this $K$ solves \eqref{solving}. Then, from Lemma~\ref{main_lemma}, it solves our claim.
\end{proof}

\section{Variational Rolling Stones}
\label{sec:worn}
Throughout this section, we will assume all convex bodies are $C^2_+$. Following Tso \cite{KT85}, one can consider a weighted version of \eqref{eq:WornPDE}:  For $\xi\in\s^{n-1}$ and $t\in [0,T)$ with a fixed $T>0$
\begin{equation}
\label{eq:WornPDE_2}
    \pdv{h(t,\xi)}{t}=-T\varphi(\xi)\kappa(t,\xi)
\end{equation}
for some continuous, positive function $\varphi(\xi)$. We recall that $h(t,\xi)=h_{K(t)}(\xi)$ for some collection of $C^2_+$ convex bodies $\{K(t)\}$. Repeating the above framework discussed in the introduction for the case when $\varphi$ is a positive constant, self-similar solutions (which we recall means $h(t,\xi)=T^{-\frac{1}{n}}h_{K(0)}(\xi)(T-t)^{\frac{1}{n}}$) satisfy 
\[
\frac{1}{n}h_{K(0)}(\xi)  dS_{K(0)}(\xi)=\varphi(\xi)d\xi,
\]
which is again the log-Minkowski problem, this time for the Borel measure on the sphere with density $\varphi(\xi)$. One again obtains, when $\varphi$ is an even function, that self-similar solutions exist via \cite{BLYZ13}.

We now explain the variational version of this framework, focusing on the torsional rigidity case. The reader can deduce a similar outline for a more generic $\alpha$-homogeneous, diagonal set-valued Borel measure. Recall that given a convex body $K$, there is a unique solution $u_K$ solving \eqref{eq:tor_PDE}. Furthermore, the torsional rigidity $\tau$ can be viewed as a diagonal set-valued Borel measure with density $|\nabla u_K(x)|^2$. For $\xi\in\s^{n-1}$ and $t\in [0,T)$ with a fixed $T>0$, consider
\begin{equation}
\label{eq:VariPDE}
    |\nabla u_{K(t)}(N^{-1}_{K(t)}(\xi))|^2\pdv{h(t,\xi)}{t}=-T^{-1}\varphi(\xi)\kappa(t,\xi).
\end{equation}
We then obtain  the following Monge-Amp\`ere equation:
\[
 |\nabla u_{K(t)}(N^{-1}_{K(t)}(\xi))|^2\pdv{h(t,\xi)}{t} \det(D^2h(t,\xi)+h(t,\xi)\text{I}_{n-1})=-T^{-1}\varphi(\xi).
\]
We recall that the solution to the torsional rigidity problem satisfies the following ``pseudo"-homogeneity:
$$u_{cK}(cx)=c^{2}u_{K}\left(x\right).$$
for $c>0$. This then implies that
$$|\nabla u_{tK}(cx)|^2=c^2|\nabla u_K(x)|^2.$$
We now consider self-similar solutions of the form $$h(t,\xi)=T^{-\frac{1}{n+2}}h_{K(0)}(\xi)(T-t)^{\frac{1}{n+2}}.$$
Then, \begin{align*}\pdv{h(t,\xi)}{t}&= -T^{-\frac{1}{n+2}}\frac{1}{n+2}(T-t)^{-\frac{n+1}{n+2}}\quad\text{and } 
\\
|\nabla u_{K(t)}(N^{-1}_{K(t)}(\xi))|^2 &= T^{-\frac{2}{n+2}}(T-t)^{\frac{2}{n+2}}|\nabla u_{K}(N^{-1}_{K}(\xi))|^2.
\end{align*}
Using the fact that $$|\nabla u_{K}(N^{-1}_{K}(\xi))|^2 dS_{K(0)}(\xi) = S_{K(0)}^{\mu_0}(\xi),$$
where $\mu_0$ is the measure with density $|\nabla u_{K_0}|^2$, we obtain

\[
\frac{1}{n+2}h_{K(0)}(\xi)  dS_{K(0)}^{\mu_0}(\xi)=\varphi(\xi)d\xi,
\]
which is, in the case $\varphi$ is even, the log-Minkowski problem shown in Theorem~\ref{t:tor_prob} with a Borel measure $\nu$ on $\s^{n-1}$ that has density $\varphi$. We collect this observation in the following corollary of Theorem~\ref{t:tor_prob}.
\begin{corollary}
For $t\in (0,T]$, consider the curvature flow given by \eqref{eq:VariPDE}, where $u_{K(t)}$ is the solution to the torsional rigidity problem \eqref{t:tor_prob} on the convex body $K(t)$, $\{K(t)\}$ is a collection of $C^2_+$ convex bodies indexed by $t$, and $\varphi$ is a continuous, even, positive function on $\s^{n-1}$. Then, there exists a self-similar solution with death time $T$ to this problem. That is, there is a symmetric convex body $K(0)$ such that, for every $t\in [0,T)$ and $\xi\in\s^{n-1}$,
$$h_{K(t)}(\xi)=T^{-\frac{1}{n+2}}h_{K(0)}(\xi)(T-t)^{\frac{1}{n+2}},$$
and
$$\frac{1}{n+2}h_{K(0)}(\xi)  |\nabla u_{K}(N^{-1}_{K}(\xi))|^2 dS_{K(0)}(\xi)=\varphi(\xi)d\xi.$$
\end{corollary}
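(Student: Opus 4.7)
The plan is to observe that the corollary is essentially a repackaging of Theorem~\ref{t:tor_prob}, once the self-similar ansatz is substituted into \eqref{eq:VariPDE}. The algebraic reduction has already been carried out in the paragraph preceding the statement: plugging $h(t,\xi)=T^{-1/(n+2)}h_{K(0)}(\xi)(T-t)^{1/(n+2)}$ into \eqref{eq:VariPDE}, together with the pseudo-homogeneity $|\nabla u_{cK}(cx)|^2=c^{2}|\nabla u_K(x)|^2$, yields the time-independent identity
\[
\frac{1}{n+2}h_{K(0)}(\xi)\,|\nabla u_{K(0)}(N^{-1}_{K(0)}(\xi))|^2\,dS_{K(0)}(\xi)=\varphi(\xi)\,d\xi.
\]
By the definition \eqref{eq:tor_meas} of the torsional measure, the left-hand side is exactly $d\mathcal{T}_{K(0)}(\xi)$. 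So the corollary reduces to finding $K(0)\in\conbod_e$ with $\mathcal{T}_{K(0)}=\nu$, where $d\nu=\varphi\,d\xi$.

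Next I verify the hypotheses of Theorem~\ref{t:tor_prob} for this choice of $\nu$. Since $\varphi$ is continuous, strictly positive, and even, $\nu$ is a finite, positive, even Borel measure on $\s^{n-1}$ with $\nu(\s^{n-1})=\int_{\s^{n-1}}\varphi\,d\xi>0$. For the strict subspace concentration inequality \eqref{eq:SSCC}, observe that if $H\subset\R^n$ is a proper linear subspace with $1\le\dim H\le n-1$, then $H\cap\s^{n-1}$ is a lower-dimensional sphere and thus has vanishing $(n-1)$-dimensional spherical Hausdorff measure; consequently $\nu(H\cap\s^{n-1})=0$, which is strictly smaller than $\tfrac{1}{n}\nu(\s^{n-1})\dim H$. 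Applying Theorem~\ref{t:tor_prob} produces the desired $K(0)\in\conbod_e$ with $\mathcal{T}_{K(0)}=\nu$, and inserting it into the ansatz yields the claimed self-similar flow and the displayed identity.

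The main conceptual obstacle has therefore been absorbed into Theorem~\ref{t:tor_prob}; the only residual subtlety concerns regularity. The corollary is framed for a family $\{K(t)\}$ of $C^2_+$ bodies, whereas the body $K(0)$ produced by Theorem~\ref{t:tor_prob} is a priori only an origin-symmetric convex body. To legitimately speak of $|\nabla u_{K(0)}(N^{-1}_{K(0)}(\xi))|^2$ pointwise on $\partial K(0)$ and to interpret \eqref{eq:VariPDE} classically one needs additional smoothness of $\partial K(0)$. This is expected because $\varphi$ is continuous and uniformly bounded away from zero, so standard regularity results for the associated Monge-Ampère-type equation propagate the smoothness of $\varphi$ to $h_{K(0)}$; any fully classical version of the corollary would invoke this regularity machinery as the final step.
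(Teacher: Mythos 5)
Your argument is correct and is essentially the paper's own: substituting the self-similar ansatz into \eqref{eq:VariPDE} and using the pseudo-homogeneity $|\nabla u_{cK}(cx)|^2=c^{2}|\nabla u_K(x)|^2$ collapses the flow to the static log-Minkowski identity $\frac{1}{n+2}h_{K(0)}\,dS^{\mu_0}_{K(0)}=\varphi\,d\xi$, which is then solved by Theorem~\ref{t:tor_prob}. Your explicit check that $d\nu=\varphi\,d\xi$ satisfies the strict subspace concentration condition, and your caveat that $K(0)$ returned by Theorem~\ref{t:tor_prob} must in fact be $C^2_+$ for the classical formulation of \eqref{eq:VariPDE} to make sense, are both left implicit in the paper (the first because such absolutely continuous measures trivially satisfy \eqref{eq:SSCC}, the second because Section~\ref{sec:worn} opens with the blanket assumption that all bodies considered there are $C^2_+$), so these are useful clarifications rather than a different route.
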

We note that the curvature flow problem \eqref{eq:VariPDE} was first introduced by Crasta and Fragal\'a \cite{CF23}. However, in place of $\varphi(\xi)$, they considered $\tau(K(t))$. Considering this slightly different problem, they showed that \textit{if} a solution exists, then it converges to a ball along the corresponding curvature flow. However, from the homogeneity of $\tau$, a self-similar solution to their version of \eqref{eq:VariPDE} of the form $h_{K(t)}(\xi)=T^{-\beta}h_{K(0)}(\xi)(T-t)^\beta$ cannot exist (via direct substitution).

\section{Remarks on Capacity}\label{sec:remarks_on_capacity}
One of the most common functionals in the calculus of variations is capacity. The reader may wonder why we have not mentioned such an important functional in the previous sections. Thus, we conclude with some remarks concerning the capacity functional and its generalizations; we show that capacity falls outside the framework introduced in Section~\ref{sec:gen}. Suppose that $\Omega$ is a bounded domain, and let  $\Delta_p$ denote the $p$-Laplacian. Then, for $p>1$ we consider the following system:
 \begin{equation}\label{eq:capacity}
 \begin{cases}
     \Delta_p \omega (x)=0 \; &\text{for } x \in \R^n\setminus\Omega,
     \\
     \omega(x) = (p-1)^\frac{1}{p} \; &\text{for } x \in \partial \Omega,
     \\
     \lim_{|x|\to\infty}\omega(x)=0.
\end{cases}
 \end{equation}
  The (unique) solution to \eqref{eq:capacity} is called the \textit{p-capacitary function}, which we denote as $\omega_\Omega$ (here, we suppress the dependence on $p$). The $p$-capacitary function generates the $p$-capacity:
  $$C_p(\Omega)=\frac{1}{p-1}\int_{\R^n\setminus{\overline{\Omega}}}|\nabla \omega_\Omega(x)|^pdx.$$
  We remark that we slightly changed the usual PDE associated to $p$-capacity; usually, in \eqref{eq:capacity}, it is required that $\omega(x)=1$ on $\partial \Omega$ (for all $p$), and the solution of the (usual) PDE is merely $(p-1)^{-\frac{1}{p}}$ times our solution. The quantity $C_p(\Omega)$ is still the same, i.e.
  $$C_p(\Omega)=\inf \left\{\int_{\R^n}|\nabla u|^p d x: u \in C_c^{\infty}\left(\R^n\right) \text { and } u \geq 1 \text { on } \Omega\right\},$$
  where $C_C^\infty(\R^n)$ is the set of infinitely differentiable, compactly support functions on $\R^n$. This slightly different choice for the PDE allows for the presentation of facts below.
  
  For a convex body $K$ (again writing $K$ for int$(K)$ in an abuse of notation), the $p$-capacity has the following Hadamard derivative (known as the Poincar\'e formula): For $1<p<n$ \cite{CNSXYZ15} and $f\in C(\s^{n-1})$
\begin{equation*}
\begin{split}
     C_p(K)&=\frac{1}{n-p}\int_{\s^{n-1}}h_{K}(u)d\mu_p(K,u) \quad \text{and}
     \\
     &\quad\quad \frac{d}{dt}C_p([h_K+tf])\Big|_{t=0}= \int_{\s^{n-1}} f(u)d\mu_p(K,u),
     \end{split}
 \end{equation*}
 where $\mu_p(K,u)$ is the $p$-capacitary measure (which is the pushforward of $|\nabla \omega_K|^p$ from $\partial \Omega$ to $\s^{n-1}$ \cite{DJ96_2, CNSXYZ15, CS03, CJL96}, i.e., $\mu_p(K,\cdot)$ corresponds to $S^{\mu_K}_K$ from Section~\ref{sec:set_dep}). It is easy to verify that $p$-capacity is monotonically increasing and $(n-p)$-homogeneous.
 
 When $p=2$, the $p$-capacity is called the Newton capacity; Jerison solved the Minkowski problem in the Newton capacity case \cite{DJ96} and Colesanti et al. \cite{CNSXYZ15} settled the Minkowski problem for $p$-capacity. Unfortunately, our procedure does not yield the existence of solutions to the even logarithmic Minkowski problem for $p$-capacity, as the isoperimetric inequality in this case, the \textit{Szeg\"o inequality}, goes the wrong way: For every $p\geq 1,$
$$\left(\frac{\vol_n(K)}{\vol_n(B_2^n)}\right)^\frac{1}{n} \leq \left(\frac{C_p(K)}{C_p(B_2^n)}\right)^\frac{1}{n-p},$$
and this is impossible to reverse since there exist sets with positive capacity but zero volume. In summary, $C_p$ satisfies properties $\eqref{A}$ and $\eqref{C}$ but not $\eqref{B}$.  \\Recently, the Logarithmic Minkowski problem for the capacity was settled in the discrete case \cite{XGXJ22}. Thus, the necessity of a suitable isoperimetric inequality might be an inherent vice of the method at hand, opening interesting developments on how to overcome this problem.

\bigskip
{\bf Acknowledgments:} We extend heartfelt thanks to Gabriele Bianchi, Graziano Crasta, Matthieu Fradelizi, Ilaria Fragal\'a, Paolo Gronchi, David Jerison, and Artem Zvavitch for the helpful comments concerning this work. We also thank Yiming Zhao, who suggested the problem at the 2022 ``Workshop in Convexity and High-Dimensional Probability", organized by Galyna Livshyts and hosted at Georgia Tech University. We are grateful to the reviewer for the thorough and insightful comments, which significantly improved the presentation of this work.

\printbibliography

 \begin{tabular}{l l}
Dylan Langharst & Jacopo Ulivelli\\
Institut de Math\'ematiques de Jussieu & Diparitimento di Matematica Guido Castelnuovo\\
Sorbonne Universit\'e & Sapienza, University of Rome\\
Paris, 75252 & Roma, Piazzale Aldo Moro 00185 \\
France & Italy \\
  dylan.langharst@imj-prg.fr & jacopo.ulivelli@uniroma1.it\\
\end{tabular}

\end{document}